\newtheorem{theorem}{Theorem}[section]
\newtheorem{proposition}[theorem]{Proposition}
\newtheorem{corollary}[theorem]{Corollary}
\theoremstyle{definition}
\newtheorem*{remark}{Remark}  
\newcommand{\defn}[1]{\emph{\color{red} #1}} 
\def\areaseq{\operatorname{areaseq}}
\def\area{\operatorname{area}}
\def\rises{\operatorname{rises}}
\def\returns{\operatorname{returns}}
\def\Cat{\operatorname{Cat}}
\def\NN{\mathbb{N}}
\begin{document}
  \title{On a new collection of words in the Catalan family}
  
  \author[Christian Stump]{Christian Stump$^\dagger$}
  \address[C.~Stump]{Institut f\"ur Mathematik, Freie Universit\"at Berlin, Germany}
  \email{christian.stump@fu-berlin.de}
  \urladdr{http://homepage.univie.ac.at/christian.stump/}
  \thanks{$^\dagger$Research supported by the German Research Foundation DFG, grant STU 563/2-1 ``Coxeter-Catalan combinatorics''}
  \subjclass[2000]{Primary 05A19}
  \keywords{bijective combinatorics, Catalan numbers, combinatorial statistics}

  \begin{abstract}
    In this note, we provide a bijection between a new collection of words on nonnegative integers of length~$n$ and Dyck paths of length $2n-2$, thus proving that this collection belongs to the Catalan family.
    The surprising key step in this bijection is the zeta map which is an important map in the study of $q,t$-Catalan numbers.
    Finally we discuss an alternative approach to this new collection of words using two statistics on planted trees that turn out to be closely related to the Tutte polynomial on the Catalan matroid.
  \end{abstract}

  \maketitle

  \section{Introduction}

  M.~Albert, N.~Ru\v{s}kuc, and V.~Vatter~\cite{ARV2014} recently introduced the following collection of words on nonnegative integers, and asked on MathOverflow~\cite{Vat2013} for ``a nice bijection between these words and any family of classical Catalan objects such as Dyck paths or noncrossing partitions''.
  For a positive integer~$n$, let $\mathcal{L}_n$ denote the set of all words $a = (a_1,\ldots,a_n)$ of~$n$ nonnegative integers such that
  \begin{enumerate}
    \item[(A1)] $a_{i+1} \geq a_i-1$ for $1 \leq i < n$,
    \item[(A2)] if $a_i = k > 0$ with $i$ minimal, then there exist $i_1 < i < i_2$ such that $a_{i_1} = a_{i_2} = k-1$.
  \end{enumerate}
  The first property says that such sequences do not have drops greater than one, while the second says that the leftmost occurrence of $k$ in $a$ has a $k-1$ somewhere to its left and somewhere to its right.
  We refer to these two properties as \defn{Property~$(A)$}.
  The only word of length $2$ with these properties is
  $$00.$$
  There are $2$ words of length $3$ given by
  $$000,010.$$
  For length $4$, there are $5$ such words,
  $$0000,0010,0100,0101,0110,$$
  and for length $5$ there are $14$,
  $$
  00000,00010,00100,00101,00110,01000,01001,
  01010,01011,01021,01100,01101,01110,01210.
  $$
  V.~Vatter asked in~\cite{Vat2013} for a bijective proof that this collection of words of length~$n$ is counted by the \defn{$(n-1)$-st Catalan number} $\Cat_{n-1} = \frac{1}{n}\binom{2n-2}{n-1}$.
  In this note I provide such a bijective proof, recording my answer given at~\cite{Vat2013}.

  \medskip

  \defn{Dyck paths} are lattice paths from $(0,0)$ to $(n,n)$ that never go below the diagonal $x=y$.
  We encode a Dyck path~$D$ as a sequence of~$n$ north and~$n$ east steps such that every prefix of~$D$ does not contain more east steps than north steps, and denote all Dyck paths of length $2n$ by $\mathcal{D}_n$.
  For example, for~$n=3$, there are $5$ Dyck paths of length~$6$, namely
  $$NNNEEE, NNENEE, NNEENE, NENNEE, NENENE.$$

  \begin{theorem}
    There is an explicit bijection $\mathcal{L}_n\ \tilde\longrightarrow\ \mathcal{D}_{n-1}$.
  \end{theorem}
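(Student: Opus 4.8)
The plan is to exhibit the bijection as a composite whose middle step is the zeta map, the outer steps being bookkeeping re‑encodings, and to put all of the real work into showing that sweeping is exactly what converts property $(A)$ into the Dyck condition.

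\emph{Preliminary reductions and the re‑encoding.} First note that every $a\in\mathcal L_n$ has $a_1=0$, since otherwise $(A2)$ fails at the minimal index $i=1$; by $(A1)$, reading $(a_1,\dots,a_n)$ as a sequence of heights yields a lattice path whose descents are all single unit steps while ascents are arbitrary; and a short induction from $(A2)$ bounds $\max_i a_i$, so $\mathcal L_n$ is finite. These observations let me re‑encode $a$ as a lattice path $\beta(a)$ built by scanning $a$ from left to right and, at each step, emitting a prescribed short block of north and east steps determined by the sign and size of $a_{i+1}-a_i$. Property $(A1)$ makes this well defined, $\beta$ is injective because $a$ can be read back off the block structure, and a count shows $\beta(a)$ uses $n-1$ steps of each kind. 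The subtle point — the reason the theorem is not immediate — is that the defining inequality ``never below the diagonal'' is \emph{not} what $(A2)$ translates to under $\beta$.

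\emph{Sweeping.} I then apply the zeta (sweep) map $\zeta$ and claim $\Phi=\zeta\circ\beta$ is a bijection onto $\mathcal D_{n-1}$. Injectivity is automatic from injectivity of $\beta$ together with bijectivity of $\zeta$. For surjectivity one runs the construction backwards: given $D\in\mathcal D_{n-1}$, form $\zeta^{-1}(D)$, fold it back up into a word $a$ via $\beta^{-1}$, and check that $a\in\mathcal L_n$; property $(A1)$ falls out of the block structure, but property $(A2)$ must be verified, and this is the heart of the argument. The statement to prove is that, tracking $\zeta$ diagonal by diagonal, ``the leftmost occurrence of every $k>0$ has a $k-1$ both before it and after it'' is precisely the preimage under $\beta^{-1}\circ\zeta^{-1}$ of the condition ``the path stays weakly above $x=y$''. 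In other words, the unswept picture does not recognise $(A2)$ as any standard positivity condition; only after sweeping does $(A2)$ become ``Dyck'', which is the surprising role of $\zeta$ announced in the abstract.

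\emph{Main obstacle and an alternative route.} The single genuinely hard step is thus the equivalence $(A2)\Longleftrightarrow\text{Dyck after sweeping}$; the re‑encodings, the injectivity, the length and step counts, and the boundedness of the letters are all routine. Independently, and matching the last sentence of the abstract, I would also give a non‑bijective confirmation of the count: encode words of $\mathcal L_n$ by planted trees with $n-1$ edges through two natural statistics, recognise the resulting joint generating function as a specialization of the Tutte polynomial of the Catalan matroid $M_{n-1}$, and conclude $|\mathcal L_n| = T_{M_{n-1}}(1,1) = \Cat_{n-1}$; this produces no explicit bijection but explains the appearance of the Catalan matroid.
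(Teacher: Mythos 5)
Your overall architecture (re\nobreakdash-encode the word as a lattice path, apply $\zeta$, recover a Dyck path of semilength $n-1$) is the paper's, but two of your concrete claims do not survive inspection, and the step you yourself flag as the heart of the matter is left unproved. First, the map $\beta$: words in $\mathcal{L}_n$ satisfy $(A1)$ (descents bounded by one) but may have arbitrarily large ascents, so they are \emph{not} area sequences. The paper must first convert Property~$(A)$ into the mirror Property~$(B)$ ($a_{i+1}\leq a_i+1$) by locally interchanging offending neighbors (e.g.\ $01021\mapsto 01201$), and only then read the word as the area sequence of a Dyck path in $\mathcal{D}_n$ --- semilength $n$, not $n-1$. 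A block encoding that depends only on the single difference $a_{i+1}-a_i$ cannot reproduce this swap, since the swap alters two consecutive differences at once; and your count of ``$n-1$ steps of each kind'' fails whenever $a_n\neq 0$ (again $01021$). Second, you misidentify what $\zeta$ accomplishes: its input here is already a Dyck path, so $(A2)$ is not ``converted into the Dyck condition''. The correct statement, which the paper reads off from the definition of $\zeta$, is that the residue $(C3)$ of $(A2)$ (every value $k>0$ has a $k-1$ to the right of its first occurrence) holds if and only if $\zeta(D)$ has \emph{no inner touch points}, i.e.\ $\zeta(D)=N\tilde D E$; it is this that licenses stripping the first north and last east step to land in $\mathcal{D}_{n-1}$. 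Your proposal asserts the crucial equivalence without proof and, as stated (Dyck condition rather than touch points, stripping before $\zeta$ rather than after), asserts the wrong equivalence.

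The ``alternative route'' is also not independent: the identification of the bistatistic generating function on planted trees with the Tutte polynomial of the Catalan matroid was precisely Speyer's conjecture, which the paper proves \emph{using} the zeta map, so you cannot invoke it to avoid the bijective argument. Moreover the relevant quantity is not $T(1,1)$ (that counts all bases, i.e.\ all of $\mathcal{D}_n$, giving $\Cat_n$); by the paper's Proposition~3.1, $\#\mathcal{L}_n=\sum_q c(1,q,n)$ is the number of trees with a unique crucial vertex, equivalently the number of Dyck paths in $\mathcal{D}_n$ with exactly one return, and identifying that with $\Cat_{n-1}$ still requires an argument (the paper's Corollary that $c(p,q,n)$ depends only on $p+q$, or the stripping bijection itself).
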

  The procedure in the following section yields the explicit bijection proposed in the theorem.
  We start with defining a few statistics on Dyck paths that will be used in this note.
  To this end, let~$D$ be a Dyck path of length~$2n$.
  \begin{itemize}
    \item The \defn{area sequence} $\areaseq(D) = (a_1,\ldots,a_n)$ is given by
      $a_i = i - x_i - 1$ where $x_i$ is the number of east steps before the $i$\textsuperscript{th} north step in~$D$.
      The $5$ Dyck paths above thus get the $5$ area sequences
      $$(0,1,2), (0,1,1), (0,1,0), (0,0,1), (0,0,0).$$

    \item The \defn{area statistic} $\area(D)$ is given by the sum of the entries in the area sequence,
    \item the \defn{number of initial rises} $\rises(D)$ is given by the number of north steps in~$D$ before the first east step.
    \item the \defn{number of returns} $\returns(D)$ is the number of returns of~$D$ to the main diagonal.
    In symbols, $\returns(D) = \#\big\{ 1 \leq i \leq n \ :\ \areaseq(D)_i = 0 \big\}$.
    Finally, a return is called an \defn{inner touch point} if it is not the final return to the main diagonal in the point $(n,n)$.
    Thus, the number of inner touch points is given by $\#\big\{ i \ :\ \areaseq(D)_i = 0 \big\}-1$.
  \end{itemize}

  \section{The procedure}

  For the reader's convenience, we provided a Sage worksheet implementing each step in the construction at \url{http://sage.lacim.uqam.ca/home/pub/33/}.

  \bigskip

  Let $a = (a_1,\ldots,a_n)$ be a sequence of nonnegative integers.
  It satisfies \defn{Property~$(B)$} if
  \begin{enumerate}
    \item[(B1)] $a_{i+1} \leq a_i+1$,
    \item[(B2)] if $a_i = k > 0$ with $i$ minimal, then there exist $i_1 < i < i_2$ such that $a_{i_1} = a_{i_2} = k-1$.
  \end{enumerate}

  Interchanging neighbors that do not satisfy Property~$(B1)$ does not interfere with Property $(A2) = (B2)$ and thus provides a bijection between sequences with Property~$(A)$ and those with Property~$(B)$.
  For example, there are eight sequences of length $6$ satisfying Property~$(A)$ that do not satisfy Property~$(B1)$,
  $$001021, 011021, 010021, 010210, 010211, 010212, 012102, 010221.$$
  Interchanges $0$'s and $2$'s where necessary then yields
  $$001201, 011201, 012001, 012010, 012011, 012012, 012120, 012201.$$

  \medskip

  Next, we say that $a = (a_1,\ldots,a_n)$ satisfies \defn{Property~$(C)$} if
  \begin{enumerate}
    \item[(C1)] $a_1 = 0$
    \item[(C2)] $a_{i+1} \leq a_i+1$,
    \item[(C3)] if $a_i = k > 0$ with $i$ minimal, then there exist $i < i_2$ such that $w_{i_2} = k-1$.
  \end{enumerate}
  Properties~$(B)$ and~$(C)$ are equivalent since $(B2)$ implies that $a_1 = 0$. Together with $(B1) = (C2)$ this then implies that every $a_i=k$ in the sequence $a$ has a $k-1$ somewhere to its left, and we can drop this part of $(B2)$ to obtain $(C3)$.

  \medskip
  
  It is now well known that the map sending a Dyck path $D \in \mathcal{D}_n$ to its area sequence $\areaseq(D)$ is a bijection between $\mathcal{D}_n$ and sequences satisfying Properties~$(C1)$ and~$(C2)$.
  We thus say that $a = (a_1,\ldots,a_n)$ satisfies \defn{Property~$(D)$} if it satisfies Properties~$(C1)$ and~$(C2)$, and call such sequences \defn{area sequences}.

  \medskip
  
  Since Property~$(C)$ is strictly stronger than Property~$(D)$, we have reached an embedding of sequences of length $n$ with Property~$(A)$ into Dyck paths of length $2n$.
  Next, we apply the \defn{zeta map} $\zeta: \mathcal{D}_n \longrightarrow \mathcal{D}_n$, as studied for example in~\cite[page~50]{Hag2008}.
  This map is defined by given a sequence $a = (a_1,\ldots,a_n)$ satisfying Property~$(D)$, it returns a Dyck path as follows:
  \begin{itemize}
    \item Build an intermediate Dyck path (the \defn{bounce path}) consisting of $d_1$ north steps, followed by $d_1$ east steps, followed by $d_2$ north steps and $d_2$ east steps, and so on, where $d_i$ is the number of $i-1$'s within $a$. For example, given $a = (0,1,2,2,2,3,1,2)$, we build the path $NE\ NNEE\ NNNNEEEE\ NE$ (this is the dashed path in~\cite[Figure~3]{Hag2008}).

    \item Next, the rectangles between two consecutive peaks of the bounce path are filled. Observe that such the rectangle between the $k$-th and the $(k+1)$-st peak must be filled by $d_k$ east steps and $d_{k+1}$ north steps. In the above example, the rectangle between the second and the third peak must be filled by $2$ east and $4$ north steps, the $2$ being the number of $1$'s in $a$, and $4$ being the number of $2$'s. To fill such a rectangle, scan through the sequence $a$ from left to right, and add east or north steps whenever you see a $k-1$ or $k$, respectively. So to fill the $2 \times 4$ rectangle, we look for $1$'s and $2$'s in the sequence and see $122212$, so this rectangle gets filled with $ENNNEN$.

    \item This completes the zeta map, and the path we obtain in the example is then given by $N\ ENN\ ENNNEN\ EEENE\ E$.
  \end{itemize}

  This zeta map has obtained quite some attention in the past 10 years in the context of the (still open) problem to combinatorially understanding the symmetry of the $q,t$-Catalan numbers.
  It was constructed for the following two remarkable properties (which do not play any significant role in the present context):

  \begin{itemize}

    \item It sends the \defn{dinv statistic} given by the number of pairs $k<\ell$ with $a_k-a_\ell \in \{0,1\} $ to the area statistic.

    \item It sends the area statistic to the \defn{bounce statistic} given by the sum of the weighted bounce points $\sum_i id_i$ where the $d_i$'s are the inner touch points of the bounce path as given in the first step of the definition of the zeta map.

  \end{itemize}

  The reason why this map is the key to provide a bijection for the new collection of words considered in this note is given by the following two further properties, which are both direct consequences of the definition.
  Nevertheless, to the best of my knowledge, they have not been used in the literature before.
  For $D \in \mathcal{D}_n$, we have that
  \begin{enumerate}[(i)]

    \item the zeta map sends the number of $0$'s in $\areaseq(D)$ to the number of north steps before the first east step in $\zeta(D)$, and 

    \item it sends the number of $i$'s for which the last occurrence of $i$ in $\areaseq(D)$ is left of the first occurrence of the first $i+1$ to the number of inner touch points of $\zeta(D)$. \label{eq:zeta2}

  \end{enumerate}

  Observe that~\eqref{eq:zeta2} can be reformulated in the way that $\areaseq(D) = (a_1,\ldots,a_n)$ satisfies Property~$(C3)$ if and only if $\zeta(D)$ leaves the diagonal in the very beginning and only returns in the very end, and nowhere in between.
  Thus, stripping off the first north and the last east step from $\zeta(D)$ yields a Dyck path of length $2n-2$, and we finally completed the proposed bijection.

  \section{A related bistatistic on planted trees and the Catalan matroid}

  A \defn{planted tree}\footnote{In~\cite{Spe2013}, such trees are called \emph{rooted planar trees}. I use \emph{planted trees} here since the order of children is not only given cyclically, but linearly. I thank Christian Krattenthaler for bringing this term to my attention.}
  is a rooted tree for which all children of a vertex come in a given linear order.
  The following gives a well known bijection between planted trees on vertices $\{ 0,\ldots,n\}$ and area sequences of length~$n$.
  Start with an area sequence $a = (a_1,\ldots,a_n)$ and associate with it a planted tree by saying that the vertex~$i$ for $i>0$ lives in generation $a_i$, and the parent of~$i$ is the biggest $j < i$ for which $a_j = a_i - 1$.
  Finally, add a unique root in generation $-1$.
  The inverse map is given by clockwise traveling around the planted tree starting from the root, and recording a north step whenever traveling an edge away from the root, and an east step when traveling towards the root.
  
  Following the notation in~\cite{Spe2013}, we think of the vertices of such a tree as members of an asexually reproducing species, and therefore use language like ``child'', ``parent'', ``generation'', and consider the ordering of the vertices in a given generation as their birth order.
  A vertex~$v$ is called \defn{crucial} if~$v$ is the youngest member of its generation, all the other members of that generation are childless while~$v$ has children.
  Observe in particular that for $n\geq 2$, the root is always crucial.
  The reason for considering crucial vertices is that given a Dyck path $D \in \mathcal{D}_n$ with area sequence $\areaseq(D) = (a_1,\ldots,a_n)$ and corresponding tree~$T$, then $k>0$ violates Property~$(C3)$ for $\areaseq(D)$ if and only if the youngest member of generation~$k-1$ is crucial.

  \medskip

  The planted tree corresponding to the sequence $a = (0,1,2,2,2,3,1,2)$ considered above is given by
  \begin{center}
  \begin{tikzpicture}[grow'=up, scale=0.8, every tree node/.style={draw,circle},
    level distance=1.25cm,sibling distance=.5cm, 
    edge from parent path={(\tikzparentnode) -- (\tikzchildnode)}]
    \Tree
      [.\node[red]{\bf 0};
        [.\node[red]{\bf 1};
          [.2
            [.3 ]
            [.4 ]
            [.5
              [.6 ] ] ]
        [.7 [.8 ] ]
      ] ]
  \end{tikzpicture}
  \end{center}
  Its only crucial vertices are $0$ and $1$.
  The $0$ is the root and as such always crucial, while the crucial vertex in generation~$0$ corresponds to the fact that all $0$'s in $(0,1,2,2,2,3,1,2)$ come before all $1$'s, thus violating~$(C3)$.
  Moreover, the $5$ trees on $4$ vertices are given by
  \begin{center}
  \begin{tikzpicture}[grow'=up, scale=0.8, every tree node/.style={draw,circle},
    level distance=1.25cm,sibling distance=.5cm, 
    edge from parent path={(\tikzparentnode) -- (\tikzchildnode)}]
    \Tree
      [.\node[red]{\bf 0};
        [.\node[red]{\bf 1};
          [.\node[red]{\bf 2};
            [.3 ]
            ]
          ]
        ]
      ]
  \end{tikzpicture}
  \qquad
  \begin{tikzpicture}[grow'=up, scale=0.8, every tree node/.style={draw,circle},
    level distance=1.25cm,sibling distance=.5cm, 
    edge from parent path={(\tikzparentnode) -- (\tikzchildnode)}]
    \Tree
      [.\node[red]{\bf 0};
        [.\node[red]{\bf 1};
          [.2 ]
          [.3 ]
        ]
      ]
    ]
  \end{tikzpicture}
  \qquad
  \begin{tikzpicture}[grow'=up, scale=0.8, every tree node/.style={draw,circle},
    level distance=1.25cm,sibling distance=.5cm, 
    edge from parent path={(\tikzparentnode) -- (\tikzchildnode)}]
    \Tree
      [.\node[red]{\bf 0};
        [.1
          [.2 ]
        ]
        [.3 ]
      ]
    ]
  \end{tikzpicture}
  \qquad
  \begin{tikzpicture}[grow'=up, scale=0.8, every tree node/.style={draw,circle},
    level distance=1.25cm,sibling distance=.5cm, 
    edge from parent path={(\tikzparentnode) -- (\tikzchildnode)}]
    \Tree
      [.\node[red]{\bf 0};
        [.1 ]
        [.\node[red]{\bf 2};
          [.3 ]
        ]
      ]
    ]
  \end{tikzpicture}
  \qquad
  \begin{tikzpicture}[grow'=up, scale=0.8, every tree node/.style={draw,circle},
    level distance=1.25cm,sibling distance=.5cm, 
    edge from parent path={(\tikzparentnode) -- (\tikzchildnode)}]
    \Tree
      [.\node[red]{\bf 0};
        [.1 ]
        [.2 ]
        [.3 ]
      ]
    ]
  \end{tikzpicture}
  \end{center}
  \medskip

  Denote by $c(p,q,n)$ the number of planted trees on~$n$ vertices with~$p$ crucial vertices, and where the root has~$q$ children.
  For example, among the previous $5$ planted trees on~$4$ vertices, there is one tree each with $(p,q)$ equal to
  $$(3,1), (2,1), (1,2), (2,2), (1,3).$$
  The following properties of the above bijection are straightforward.
  \begin{proposition}\label{prop:treeprop}
    Let $T$ be a planted tree on $n+1$ vertices and let $D \in \mathcal{D}_n$ be the corresponding Dyck path.
    Then
    \begin{itemize}
      \item the number of children of the root of~$T$ equals the number of $0$'s in $\areaseq(D)$, and
      \item the number of crucial non-root vertices of~$T$ equals the number of indices $i$ for which all $i$'s appear before all $i+1$'s within the area sequence.
    \end{itemize}
    In particular, $\areaseq(D)$ satisfies Property~$(C)$ if and only $0$ is the unique crucial vertex of~$T$.
  \end{proposition}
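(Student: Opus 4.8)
The plan is to read all three assertions directly off the bijection between area sequences and planted trees, the only non-formal ingredient being a reformulation of Property~$(C3)$ in terms of the positions of the values inside $\areaseq(D)$.

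First I would treat the root. The claim reduces to the statement that, for $i>0$, the parent of $i$ is the root if and only if $a_i=0$. Indeed, the root is the unique vertex of generation $-1$, so a child $i$ of the root must satisfy $a_i-1=-1$; conversely, if $a_i=0$ there is no $1\le j<i$ with $a_j=-1$, so the largest $j<i$ with $a_j=a_i-1$ is forced to be the root. (That a vertex with $a_i>0$ really receives a non-root parent is immediate from Property~$(D)$: since $a_1=0$ and consecutive entries increase by at most one, the value $a_i-1$ occurs among $a_1,\ldots,a_{i-1}$.) Hence the children of the root are precisely the indices $i$ with $a_i=0$, and their number is the number of $0$'s in $\areaseq(D)$.

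For the crucial vertices I would argue as follows. Each generation $k\ge 0$ contains at most one crucial vertex, since by definition a crucial vertex is the youngest member of its generation; here one uses that the birth order inside generation $k$ is just the index order of the vertices valued $k$, which follows from the bijection because the parent of such a vertex $i$ is the largest $j<i$ with $a_j=k-1$ and these parents are therefore weakly increasing in~$i$. Consequently the number of crucial non-root vertices equals the number of generations $k\ge 0$ whose youngest member is crucial, and by the equivalence recorded above (that $k>0$ violates $(C3)$ exactly when the youngest member of generation $k-1$ is crucial) this equals the number of $k>0$ violating Property~$(C3)$ for $\areaseq(D)$. It then remains to observe that $k>0$ violates $(C3)$ precisely when the value $k$ occurs and every occurrence of $k-1$ precedes every occurrence of $k$ — for then the minimal index $i$ with $a_i=k$ has no $k-1$ to its right, and conversely, and by Property~$(D)$ the value $k-1$ then occurs as well. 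Reindexing by $i=k-1$, this is exactly the condition that all $i$'s appear before all $i+1$'s (understood, as in the zeta-map reformulation earlier, to include that the value $i+1$ genuinely occurs), so the two counts agree.

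Finally, the ``in particular'' is a formality: $\areaseq(D)$ automatically has Properties~$(C1)$ and~$(C2)$, so it has Property~$(C)$ iff it has $(C3)$, iff no $k>0$ violates $(C3)$, iff — by the previous paragraph — $T$ has no crucial non-root vertex; since the root (labelled $0$) is always crucial, this says precisely that $0$ is the unique crucial vertex of~$T$. The point I expect to need the most care is the one flagged above, that the birth order within a generation coincides with the index order, since the whole notion of ``crucial'' rests on identifying the ``youngest'' member; once that and the reformulation of $(C3)$ are established, the rest is bookkeeping.
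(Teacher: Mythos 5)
Your proposal is correct, and it follows the same route the paper takes implicitly: the paper offers no written proof (it declares the properties ``straightforward'' from the bijection), and the one substantive fact it does record beforehand --- that $k>0$ violates Property~$(C3)$ exactly when the youngest member of generation $k-1$ is crucial --- is precisely the pivot of your argument. You merely supply the omitted bookkeeping (root's children $\leftrightarrow$ zeroes, birth order $=$ index order, at most one crucial vertex per generation, and the convention that $i+1$ must actually occur), all of which is accurate.
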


  D.~Speyer conjectured in~\cite{Spe2013} that for fixed~$n$, the sum $\sum_{p,q \geq 0} c(p,q,n)x^py^q$ equals the Tutte polynomial of the Catalan matroid as defined by F.~Ardila in~\cite{Ard2003}.
  Together with Proposition~\ref{prop:treeprop}, one could then deduce that the number of integer sequences of length~$n$ satisfying Property~$(A)$ are counted by $\sum_{q \geq 0} c(1,q,n) = \sum_{p \geq 0} c(p,1,n)$.
  Since the latter counts planted trees where the root has a unique child, it would then follow that such sequences are counted indeed by the $(n-1)$\textsuperscript{st} Catalan number.

  \medskip

  In the remainder of this section, we show that the zeta map can as well be used to also prove this conjecture.
  We have already seen that
  $$c(1,q,n) = \#\big\{ a \in \NN^n \ :\ a \text{ satisfies Property~$(C)$ and contains exactly $q$ zeroes} \big\}.$$
  Thus, combining the bijection between planted trees and area sequences with the zeta map yields a bijection between planted trees on $n+1$ vertices and Dyck paths of length~$2n$ that sends
  \begin{itemize}
    \item the number of children of the root to the number of initial north steps, and
    \item the number of crucial vertices to the number of returns.
  \end{itemize}
  This implies the following corollary.
  \begin{corollary}
    $c(p,q,n)$ can be reinterpreted in terms of Dyck paths as
    $$c(p,q,n) = \#\big\{ D \in \mathcal{D}_n\ :\ \returns(D) = p,\ \rises(D) = q \big\}.$$
    Moreover, the generating function of $c(p,q,n)$ is given by
    $$\sum_{p,q \geq 0} c(p,q,n) x^p y^q = \sum_{D \in \mathcal{D}_n} x^{\returns(D)} y^{\rises(D)}.$$
  \end{corollary}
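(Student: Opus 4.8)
The plan is to derive the corollary by composing the two bijections already constructed and then bookkeeping the relevant statistics; no new idea is needed beyond what Section~2 and Proposition~\ref{prop:treeprop} provide. First I would make the composite map explicit. Starting from a planted tree $T$ on $n+1$ vertices, pass to its area sequence $a = a(T) = (a_1,\dots,a_n)$ via the bijection described just before Proposition~\ref{prop:treeprop}, then to the Dyck path $D := \areaseq^{-1}(a) \in \mathcal{D}_n$ (legitimate because $\areaseq$ is a bijection onto the set of area sequences, i.e.\ the Property~$(D)$ sequences), and finally apply the zeta map to obtain $\Phi(T) := \zeta(D) \in \mathcal{D}_n$. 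Each of the three arrows is a bijection, hence so is $\Phi$, and it carries planted trees on $n+1$ vertices onto $\mathcal{D}_n$.

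Next I would transport the statistics $p = \#\{\text{crucial vertices of }T\}$ and $q = \#\{\text{children of the root of }T\}$ along $\Phi$. By Proposition~\ref{prop:treeprop}, $q$ is the number of $0$'s in $a = \areaseq(D)$, and property~(i) of the zeta map sends the number of $0$'s in $\areaseq(D)$ to the number of north steps preceding the first east step of $\zeta(D)$; thus $q = \rises(\Phi(T))$. Again by Proposition~\ref{prop:treeprop}, the number of crucial non-root vertices of $T$ equals the number of indices $i$ such that all $i$'s precede all $i+1$'s in $a$, and property~\eqref{eq:zeta2} sends this quantity to the number of inner touch points of $\zeta(D)$, which by definition is $\returns(\Phi(T)) - 1$. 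Since the root of $T$ is itself crucial for $n \geq 2$, adding it back gives $p = \returns(\Phi(T))$; the cases $n \leq 1$ are checked directly.

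Finally I would restrict $\Phi$ to the planted trees on $n+1$ vertices having exactly $p$ crucial vertices and a root with exactly $q$ children: by the previous paragraph this is a bijection onto $\{D \in \mathcal{D}_n : \returns(D) = p,\ \rises(D) = q\}$, which is the first displayed identity. The generating function identity is then immediate, either by multiplying that identity by $x^p y^q$ and summing over $p,q \geq 0$, or equivalently by noting that $\Phi$ carries the weight $x^p y^q$ of a tree $T$ to the weight $x^{\returns(D)} y^{\rises(D)}$ of $D = \Phi(T)$.

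I do not expect a genuine obstacle. Once properties~(i) and~\eqref{eq:zeta2} of the zeta map are granted — the excerpt asserts them as direct consequences of the definition — and Proposition~\ref{prop:treeprop} is in hand, the argument is pure assembly of bijections. The only points needing a moment's attention are the off-by-one between inner touch points and returns, and the fact that the root counts as crucial precisely when $n \geq 2$; to make the identities hold uniformly one should read $\mathcal{D}_0$ as containing only the empty path (so $\returns = \rises = 0$ there) or simply dispose of $n \leq 1$ by inspection.
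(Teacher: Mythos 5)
Your proposal is correct and matches the paper's own argument, which likewise obtains the corollary by composing the tree--to--area-sequence bijection with the zeta map and transporting the two statistics via Proposition~\ref{prop:treeprop} and properties~(i) and~\eqref{eq:zeta2}. Your explicit attention to the off-by-one between inner touch points and returns (absorbed by the root being crucial) is a detail the paper leaves implicit, but the route is the same.
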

  In~\cite{Ard2003}, F.~Ardila introduced and studied the \defn{Catalan matroid}.
  He showed in~\cite[Theorem~3.4]{Ard2003} that the right-hand side of the generating function identity in the previous corollary is actually the \defn{Tutte polynomial} of the Catalan matroid.
  Thus, the connection to the new collection of words considered in this note and its reinterpretation in terms of planted trees yields another combinatorial description of this Tutte polynomial.
  The following corollary can then be derived from~\cite[Theorem~3.6]{Ard2003}. Another proof can be found in~\cite[Theorem~2.1]{ER2013}.
  \begin{corollary}
    $c(p,q,n)$ only depends on the sum $p+q$.
  \end{corollary}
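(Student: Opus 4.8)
The plan is to start from the Dyck-path reformulation of the previous corollary, $c(p,q,n) = \#\{D \in \mathcal{D}_n : \returns(D) = p,\ \rises(D) = q\}$, so that it suffices to show this count depends only on $p+q$; write $f_n(p,q)$ for it. I would then exhibit, for every $p \ge 1$ and every $q \ge 2$, an explicit bijection
$$\Phi\colon\ \{D \in \mathcal{D}_n : \returns(D) = p,\ \rises(D) = q\}\ \tilde\longrightarrow\ \{D \in \mathcal{D}_n : \returns(D) = p+1,\ \rises(D) = q-1\}$$
that trades one initial rise for one additional return. Granting $\Phi$, iterating it gives $f_n(p,q) = f_n(p+q-1,1)$ for all $p,q \ge 1$. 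Since for $n \ge 1$ every Dyck path has at least one return and at least one initial rise, while for $n = 0$ the only pair occurring is $(0,0)$, this shows that $f_n(p,q)$, hence $c(p,q,n)$, depends only on $p+q$.

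To construct $\Phi$, I would apply the first-return decomposition twice. Given $D$ with $\rises(D) = q \ge 2$, write $D = N\,W\,E\,R$, where $N\,W\,E$ is the first prime block of $D$ (the portion up to its first return to the diagonal) and $R$ is the remaining Dyck path; then $\returns(D) = 1 + \returns(R)$ and $\rises(D) = 1 + \rises(W)$, so $\rises(W) = q-1 \ge 1$ and in particular $W$ is nonempty. Decomposing $W$ at its own first return as $W = N\,U\,E\,V$ with $N\,U\,E$ prime and $V$ a Dyck path, I would set $\Phi(D) := (N\,U\,E)\,(N\,V\,E)\,R$. This is again a Dyck path (two prime blocks followed by $R$), with $\returns = 2 + \returns(R) = p+1$ and $\rises\bigl((N\,U\,E)(N\,V\,E)R\bigr) = \rises(N\,U\,E) = 1 + \rises(U) = \rises(W) = q-1$. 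The inverse $\Psi$ takes a path with $\returns \ge 2$, reads off its first two prime blocks $N\,U\,E$, $N\,V\,E$ and the remaining Dyck path $R$, and outputs $N\,(N\,U\,E\,V)\,E\,R$; since the first-return decomposition is unique, $\Phi$ and $\Psi$ are mutually inverse.

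I do not anticipate a serious obstacle: the whole statement reduces to this single ``return $\leftrightarrow$ initial rise'' exchange move. The only point that needs care is the statistic bookkeeping — checking that $\Phi$ really lands in the asserted target set, i.e.\ that $\returns$ increases by exactly one and $\rises$ decreases by exactly one, handling the degenerate cases where $U$, $V$, or $R$ is empty, and verifying $\Psi\circ\Phi = \mathrm{id}$ and $\Phi\circ\Psi = \mathrm{id}$ from uniqueness of the first-return decomposition. As an alternative one may simply combine the preceding corollary with Ardila's evaluation of the Tutte polynomial of the Catalan matroid in~\cite[Theorem~3.6]{Ard2003}; the bijective route above has the merit of staying entirely inside the elementary framework of this note.
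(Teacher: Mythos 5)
Your proof is correct and takes essentially the same approach as the paper, which uses the elementary ``trade one return for one initial rise'' move via the prime-block decomposition: your map $\Phi$, splitting the first prime block $N\,W\,E$ of a path with $\rises \geq 2$ into two, is precisely the inverse of the map the paper writes down, which merges the first two prime blocks of a path with $\returns \geq 2$. The statistic bookkeeping and the reduction to $f_n(p+q-1,1)$ are all in order.
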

  \begin{proof}
    We here reproduce an elementary argument by D.~Speyer from~\cite{Spe2013}.
    Let~$D$ be a Dyck path of length~$2n$ with $\returns(D) = p$ and $\rises(D) = q$ such that $p \geq 2$.
    Then the following operation on~$D$ yields a Dyck path~$D'$ with $\returns(D') = p-1$ and $\rises(D') = q+1$.
    We can write~$D$ as
    $$N D_1 E\ N D_2 E\ N D_3 E\ \cdots\ N D_p E$$
    such that all~$D_i$'s are itself Dyck paths of smaller length.
    Then~$D'$ is defined as
    $$NN D_1 E\ D_2 E\ N D_3 E\ \cdots\ N D_p E.$$
    Clearly, we have that~$\returns(D') = \returns(D)-1$ and $\rises(D') = \rises(D)+1$, as desired.
    Moreover, it is easy to see that this map is invertible for $p \geq 2$.
  \end{proof}
  \begin{remark}
    The proof of the previous corollary together with the zeta map yields an operator on planted trees with the property that given such a tree~$T$ that has more than one crucial vertex, it constructs a tree~$T'$ having one less crucial vertex and one more child of the root.
    It would be very interesting to find such an operator directly described in planted trees.
    First, this would yield another way of finding a bijection between the collection of words considered here and Dyck paths.
    Second, one could then hope to get an alternative understanding of the zeta map in terms of such trees.
  \end{remark}

  \section{Acknowledgements}

  This note is a long version of my answers to a MathOverflow question asked by V.~Vatter~\cite{Vat2013} and to a follow-up question asked by D.~Speyer~\cite{Spe2013}.
  I thank V.~Vatter for raising the original question, and D.~Speyer for providing further context in his follow-up question.
  Moreover, I thank all other people that contributed to both MathOverflow discussions.

  \bibliographystyle{amsplain} 
  \bibliography{../mrabbrev,../bibliography}
\end{document}